\documentclass[11pt]{article}

\usepackage[T1]{fontenc}
\usepackage[latin1]{inputenc}

\usepackage{amssymb}
\usepackage{amsthm}
\usepackage{amsmath}
\allowdisplaybreaks

\newcommand{\NN}{\mathbb{N}}

\newcommand{\RR}{\mathbb{R}}
\newcommand{\CC}{\mathbb{C}}

\newcommand{\HH}{\mathbb{H}}

\newtheorem{thm}{Theorem}[section]

\newtheorem{lem}[thm]{Lemma}
\newtheorem{prop}[thm]{Proposition}





\begin{document}

\title{Explicit fundamental solution for the operator $L+\alpha|T|$ on the Gelfand pair  $(\mathbb{H}_{n},U(n))$}

\author{I. E. Cardoso
  \and
  M. Subils
  \and
  R. E. Vidal
}

\newcommand{\Addresses}{{
  \bigskip
  \footnotesize

  I. E.~Cardoso, \textsc{ECEN-FCEIA, Universidad Nacional de Rosario, Pellegrini 250, 2000 Rosario, Santa Fe, Argentina}\par\nopagebreak
  \textit{E-mail address}, I. E.~Cardoso: \texttt{isolda@fceia.unr.edu.ar}

  \medskip

  M.~Subils, \textsc{ECEN-FCEIA, Universidad Nacional de Rosario, Pellegrini 250, 2000 Rosario, Santa Fe, Argentina}\par\nopagebreak
  \textit{E-mail address}, M.~Subils: \texttt{msubils@fceia.unr.edu.ar}

  \medskip

  R. E.~Vidal, \textsc{Fac. de Matemática, Astronomía y Física, Universidad Nacional de Córdoba \\
Medina Allende s/n, 5000 Córdoba, Córdoba, Argentina}\par\nopagebreak
  \textit{E-mail address}, R. E.~Vidal: \texttt{vidal@mate.uncor.edu.ar}

}}

\maketitle

\begin{abstract}
By means of the spherical functions associated to the Gelfand pair $(\mathbb{H}_{n},U(n))$ we define the operator $L+\alpha |T|$, where $L$ denotes the Heisenberg sublaplacian and $T$ denotes de central element of the Heisenberg Lie algebra, we establish a notion of fundamental solution and explicitly compute in terms of the Gauss hypergeometric function. For $\alpha<n$ we use the Integral Representation Theorem to obtain a more detailed expression. Finally, we remark that when $\alpha=0$ we recover the fundamental solution for the Heisenberg sublaplacian given by Folland.

\end{abstract}

\section{Introduction and preliminaries}

\par Let us consider the Heisenberg group $\HH_{n}=\CC^{n}\times\RR$ under the action of the unitary group $U(n)$, namely for $u\in U(n)$ and $(z,t)\in\mathbb{H}_{n}$ one has that $$u\cdot (z,t)=(u(z),t).$$ This group is a real Lie group, its Haar measure coincides with the Lebesgue measure in $\CC^{n}\times\RR$, and the metric we consider is the Carnot-Caratheodory metric given by \begin{align}\label{metric} |(z,t)| = & (|z|^{4}+16t^{2})^{\frac{1}{2}}, \end{align} for $(z,t)\in\HH_{n}$. Also, let $\{X_{1},\dots,X_{n},Y_{1},\dots,Y_{n},T\}$ be the canonical basis for the associated Lie algebra $\mathfrak{h}_{n}$, whose elements we can write in global coordinates as follows: for $j=1,\dots,n$
\begin{align*}
X_{j}=& \frac{\partial}{\partial x_{j}} - \frac{1}{2} y_{j} \frac{\partial}{\partial t},\\
Y_{j}=& \frac{\partial}{\partial y_{j}} + \frac{1}{2} x_{j} \frac{\partial}{\partial t},\\
T=& \frac{\partial}{\partial t}.
\end{align*}
Let us also consider the sublaplacian $L$: $$ L = \sum\limits_{j=1}^{n} X_{j}^{2}+Y_{j}^{2},$$ which in coordinates is given by
\begin{align*}
L=&  \sum\limits_{j=1}^{n} \left( \frac{\partial^{2}}{\partial x_{j}^{2}} +\frac{\partial^{2}}{\partial y_{j}^{2}}   \right) + \frac{1}{4}\sum\limits_{j=1}^{n}(x_{j}^{2}+y_{j}^{2})\frac{\partial^{2}}{\partial t^{2}} + \frac{\partial}{\partial t} \sum\limits_{j=1}^{n} \left( x_{j}\frac{\partial}{\partial y _{j}} - y_{j} \frac{\partial}{\partial x_{j}}\right) \\
= & \triangle_{\mathbb{R}^{2n}} +\frac{1}{4} |z|^{2}T^{2} + TR,
\end{align*}
where we define the operator $R$ as $$ R= \sum\limits_{j=1}^{n} \left( x_{j}\frac{\partial}{\partial y _{j}} - y_{j} \frac{\partial}{\partial x_{j}}\right).$$

\par It is a known fact that $L$ commutes with the action of $U(n)$ (see for example \cite{FH}), and of course also does $T$. Moreover, the operators $L$ and $T$ generate the subalgebra $\mathcal{U}(\mathfrak{h}_{n})^{U(n)}$ of the universal enveloping algebra $\mathcal{U}(\mathfrak{h}_{n})$ consisting of those left invariant differential operators which commute with the action of $U(n)$ (see for example \cite{MR}).

\par Associated to the Gelfand pair $(\HH_{n},U(n))$ there exists a family of \textit{spherical functions} $\{\varphi_{\lambda,k}\}_{\lambda\in\RR^{\ast},k\in\NN_{0}}$ which are smooth, bounded, $U(n)$-invariant and satisfy $\varphi_{\lambda,k}(0,0)=1$. For $\lambda\in\RR^{\ast}$ and $k\in\mathbb{N}_{0}$, each one is defined from the Schr\"{o}dinger representation $\pi_{\lambda}$ and the special Hermite functions $h_{\beta}^{\lambda}$, $|\beta|=k$, as follows (see for example \cite{T}):
\begin{align*}
\varphi_{\lambda,k}(z,t)=&\sum\limits_{|\beta|=k} E_{\lambda}(h_{\beta}^{\lambda},h_{\beta}^{\lambda})(z,t),
\end{align*}
where $E_{\lambda}(h_{\beta}^{\lambda},h_{\beta}^{\lambda})$ are the diagonal entries given by
\begin{align*}
E_\lambda(h_{\beta}^{\lambda},h_{\beta}^{\lambda})(z,t)=&<\pi_{\lambda}(z,t) h_{\beta}^{\lambda},h_{\beta}^{\lambda}>.
\end{align*}
Hence they can be computed, and they are explicitly given by (see for example \cite{BJR}, \cite{Str})
\begin{align}\label{spherical.functions.explicit}
\varphi_{\lambda,k}(z,t)=e^{i\lambda t}L_{k}^{n-1}\left( {\frac{|\lambda|}{2}} |z|^{2} \right) e^{-{\frac{|\lambda|}{4}}|z|^{2}},
\end{align}
where $\lambda\in\mathbb{R}^{\ast}=\mathbb{R}\backslash\{0\}$, $k\in\mathbb{N}_{0}$ and $L^{n-1}_{k}$ denotes the generalized Laguerre polynomial of degree $k$ and order $n-1$ which is normalized to attain value $1$ when evaluated at $0$, namely $$L^{n-1}_{k}(x)=(n-1)!\sum\limits_{j=0}^{k}\binom{k}{j}\frac{(-x)^{j}}{(j+n-1)!}.$$ Let us notice that there exists other bounded $U(n)$-invariant spherical functions, which do not depend on the variable $t$ and play no role in the subsequent analysis.

\par The family of spherical functions is a powerful tool: it provides us with an expansion formula for  $f\in\mathcal{S}(\HH_{n})$. Indeed, we have that (see for example \cite{T}): \begin{align*}
 f(z,t)=&\sum\limits_{k\ge 0} \int\limits_{-\infty}^{\infty} (f\ast \varphi_{\lambda,k})(z,t) |\lambda|^{n} d\lambda, \quad (z,t)\in\mathbb{H}_{n}. 
\end{align*} Also, each spherical function $\varphi_{\lambda,k}$ is an eigenfunction for both the operators $L$ and $T$ (again, see for example \cite{T}): \begin{align*}
 L\varphi_{\lambda,k} =& -|\lambda|(2k+n)\varphi_{\lambda,k}, \\
 -iT\varphi_{\lambda,k} =& \lambda\varphi_{\lambda,k}. 
 \end{align*} and moreover,
\begin{align*}
-Lf(z,t)=& \sum\limits_{k\ge 0} \int\limits_{-\infty}^{\infty}|\lambda|(2k+n)(f\ast \varphi_{\lambda,k})(z,t) |\lambda|^{n} d\lambda,\\
-iTf(z,t)=& \sum\limits_{k\ge 0} \int\limits_{-\infty}^{\infty}\lambda(f\ast \varphi_{\lambda,k})(z,t) |\lambda|^{n} d\lambda.
\end{align*}

\par From this expansion formulas we are tempted to define an operator $|T|$ as follows: for a function $f$ in, say, the Schwarz class $\mathcal{S}(\mathbb{H}_{n})$ and $(z,t)\in\mathbb{H}_{n}$, let 
\begin{align}\label{|T|}
|T|f(z,t)=\sum\limits_{k\ge 0}\int\limits_{-\infty}^{\infty} |\lambda| (f\ast\varphi_{\lambda,k})(z,t) |\lambda|^{n} d\lambda.
\end{align}this definition is by functional calculation
Thus, we would have that $$|T|\varphi_{\lambda,k} = |\lambda| \varphi_{\lambda,k}.$$ Indeed, this definition  is justified by the functional calculus because the spherical functions determine the spectral decomposition of the operators $\mathcal{L}$ and $T$ (see Theorem 2.4 of \cite{Str}), and formula \eqref{|T|} is valid for defining an operator, which we named $|T|$ (see pag. 181 of \cite{Fo}).

\par Let us consider next a fixed scalar $\alpha\in\RR$ which is different from $2k+n$ for every $k\in\mathbb{N}_{0}$. Then, the operator $\mathcal{L}_{\alpha}:=L+\alpha|T|$ satisfies 
\begin{align*}
\mathcal{L}_{\alpha}\varphi_{\lambda,k}= & (L+\alpha|T|)\varphi_{\lambda,k} \\
 = & (-|\lambda|(2k+n)+\alpha|\lambda|)\varphi_{\lambda,k} \\
= & -|\lambda|(2k+n-\alpha)\varphi_{\lambda,k}.
\end{align*}

\par Since this operator also doesn't belong to the universal enveloping algebra, let us define in this context a fundamental solution for an operator $D$ as a tempered distribution $\Phi$ such that for every $f$ in the Schwartz class, $$D(f\ast \Phi)=(Df)\ast\Phi = f\ast D(\Phi)=f.$$ Hence if we define the operator $K$ by $Kf=f\ast\Phi$, it turns out that $$K\circ Df=D\circ Kf=f.$$

\par As antecedents of finding explicit fundamental solutions let us recall some related bibliography. We begin by citing the inspiring work of Folland \cite{F} who constructed the fundamental solution for the sublaplacian $L$, and then the work of Folland and Stein \cite{FS} who, among other very interesting results, obtain an explicit formula for the fundamental solution for $L+i\alpha T$. Then, Benson, Dooley and Ratcliff study the fundamental solutions for powers of $L$ in \cite{BDR}. Godoy and Saal developed a method for the case when the group $U(p,q)$ acts on $\mathbb{H}_{n}$ instead of the unitary group $U(n)$, they considered the sublaplacian $L$. The case $L+\alpha T$ was obtained in \cite{CS}, by succesfully adapting said method. The general case of the sublaplacian in type H groups was studied first by Kaplan in \cite{K}, then followed by many others. In \cite{GS} was also studied the case of the sublaplacian for the so called quaternionic Heisenberg group, also under the action of a noncompact unitary group.

\par Following these lines, it is our intention to adapt the method from \cite{GS} for our operator $L_{\alpha}$. In this direction we then propose as a candidate to be a fundamental solution (in the sense described above) the distribution $\Phi_{\alpha}$ defined for $f\in\mathcal{S}(\mathfrak{h}_{n})$ as 
\begin{align}\label{Sol.Fund}
<-\Phi_{\alpha},f> = \sum\limits_{k\ge 0} \int\limits_{-\infty}^{\infty} \frac{1}{-|\lambda|(2k+n-\alpha)} <\varphi_{\lambda,k},f> |\lambda|^{n} d\lambda.
\end{align}
The main Theorem of the present work gives us the well definedness of the distribution $\Phi_{\alpha}$ as well as an expression for it in terms of the Gauss hypergeometric function $_{2}F_{1}$: 
\begin{thm}\label{thm:main}
$\Phi_{\alpha}$ is well defined for $f\in\mathcal{S}(\mathbb{H}_{n})$ and
\begin{align*}
<\Phi_{\alpha},f> & = \frac{-4^{n}(n-1)!}{n-\alpha} \\
& \int\limits_{\mathbb{H}^{n}} \mathfrak{Re}\left(\left(\frac{|z|^{2}+4it}{|z|^{4}+16t^{2}}\right)^{n} \,_{2}F_{1}\left(n,\frac{n-\alpha}{2},\frac{n-\alpha}{2}+1;-\frac{(|z|^{2}+4it)^{2}}{|z|^{4}+16t^{2}} \right)\right) \\
& \qquad f(z,t)  dzdt.
\end{align*}
\end{thm}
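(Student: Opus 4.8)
The plan is first to secure the well-definedness of the right-hand side of \eqref{Sol.Fund} and then to evaluate the resulting kernel in closed form. For well-definedness I would use that, being a normalized positive-definite function, each spherical function satisfies $|\varphi_{\lambda,k}(z,t)|\le\varphi_{\lambda,k}(0,0)=1$, so that $|\langle\varphi_{\lambda,k},f\rangle|\le\|f\|_{1}$, while the eigenvalue relation $L\varphi_{\lambda,k}=-|\lambda|(2k+n)\varphi_{\lambda,k}$ lets me write, for every $m\in\NN$, $\langle\varphi_{\lambda,k},f\rangle=(-|\lambda|(2k+n))^{-m}\langle\varphi_{\lambda,k},L^{m}f\rangle$; since $L^{m}f\in\mathcal{S}(\HH_{n})$ this yields rapid decay in $k$ and controls the behaviour in $\lambda$. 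Combined with the hypothesis $\alpha\neq 2k+n$ (so no denominator vanishes) and the fact that the measure $|\lambda|^{n-1}\,d\lambda$ is integrable at the origin for $n\ge 1$, this makes the double sum-integral absolutely convergent, and Fubini then lets me interchange the $(k,\lambda)$-summation with the integration against $f$, producing a locally integrable kernel $\Phi_{\alpha}(z,t)$.

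Next I would compute that kernel. Writing the central integral as $\int_{-\infty}^{\infty}=\int_{0}^{\infty}+\int_{-\infty}^{0}$ and using $\varphi_{-\lambda,k}=\overline{\varphi_{\lambda,k}}$ (immediate from \eqref{spherical.functions.explicit}), the kernel becomes $2\,\mathfrak{Re}$ of the positive-$\lambda$ part. Setting $a=\tfrac12|z|^{2}$ and $s=\tfrac14|z|^{2}-it$, the inner integral is a Laplace transform of a Laguerre polynomial,
\[
\int_{0}^{\infty}e^{-s\lambda}\lambda^{n-1}L_{k}^{n-1}(a\lambda)\,d\lambda=\frac{(n-1)!}{s^{n}}\left(\frac{s-a}{s}\right)^{k},
\]
which I would verify from the explicit series for $L_{k}^{n-1}$ together with $\int_{0}^{\infty}e^{-s\lambda}\lambda^{n-1+j}\,d\lambda=(n-1+j)!\,s^{-(n+j)}$ (the factorials from the Laguerre series cancel those from the integral).

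With $w:=\tfrac{s-a}{s}$ it then remains to sum the series in $k$. The key identity is $\frac{1}{2k+n-\alpha}=\frac{1}{n-\alpha}\,\frac{\left(\tfrac{n-\alpha}{2}\right)_{k}}{\left(\tfrac{n-\alpha}{2}+1\right)_{k}}$, which, after inserting $(1)_{k}=k!$, gives $\sum_{k\ge 0}\tfrac{w^{k}}{2k+n-\alpha}=\tfrac{1}{n-\alpha}\,_{2}F_{1}\!\left(1,\tfrac{n-\alpha}{2};\tfrac{n-\alpha}{2}+1;w\right)$. At this point the kernel equals a constant multiple, namely $\tfrac{2(n-1)!}{n-\alpha}$, times $\mathfrak{Re}\big[s^{-n}\,_{2}F_{1}(1,\tfrac{n-\alpha}{2};\tfrac{n-\alpha}{2}+1;w)\big]$. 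Using $s^{-1}=4(|z|^{2}+4it)/(|z|^{4}+16t^{2})$ one obtains $s^{-n}=4^{n}\big((|z|^{2}+4it)/(|z|^{4}+16t^{2})\big)^{n}$, which produces the prefactor $4^{n}$, and a direct computation gives $w=-\,(|z|^{2}+4it)^{2}/(|z|^{4}+16t^{2})$, matching the argument of $_{2}F_{1}$ in the statement.

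The remaining task—and the step I expect to be the main obstacle—is to reconcile the top parameter and the overall real constant: one must invoke the standard Euler/Pfaff transformation formulas for $_{2}F_{1}$, exploiting the relation $s^{-n}=(1-w)^{n}a^{-n}$ between the prefactor and the argument, in order to pass from the hypergeometric function with top parameter $1$ to the one with top parameter $n$ displayed in the statement, and to carry through the exact constant produced inside $\mathfrak{Re}(\cdot)$. This bookkeeping, performed throughout under the real part, is the delicate part of the argument; once the convergence estimates and the Laplace-transform identity are in place, the rest of the proof is routine.
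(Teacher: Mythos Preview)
Your well-definedness argument is essentially the paper's own: both rest on the relation $\langle\varphi_{\lambda,k},f\rangle=(-|\lambda|(2k+n))^{-m}\langle\varphi_{\lambda,k},L^{m}f\rangle$ and split the $\lambda$-integral into the regions of small and large $|\lambda|$, choosing different powers $m$ in each piece.

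The explicit computation, however, has a genuine gap at precisely the point you flag as ``the main obstacle''. Your Laplace-transform identity
\[
\int_{0}^{\infty}e^{-s\lambda}\lambda^{n-1}L_{k}^{n-1}(a\lambda)\,d\lambda=\frac{(n-1)!}{s^{n}}\Bigl(\frac{s-a}{s}\Bigr)^{k}
\]
is correct for the \emph{normalised} polynomial with $L_{k}^{n-1}(0)=1$ used in \eqref{spherical.functions.explicit}, and the resulting $k$-sum is then $\tfrac{1}{n-\alpha}\,{}_{2}F_{1}\bigl(1,\tfrac{n-\alpha}{2};\tfrac{n-\alpha}{2}+1;w\bigr)$ with first parameter $1$, not $n$. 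In the paper the parameter $n$ enters through the factor $\alpha_{k}=(-1)^{k}\binom{k+n-1}{n-1}$ in formula \eqref{nosemeocurre}, borrowed from \cite{GS2}: since $\binom{k+n-1}{n-1}=(n)_{k}/k!$, it is this binomial coefficient---coming from the \emph{standard} Laguerre normalisation $\tilde L_{k}^{(n-1)}(0)=\binom{k+n-1}{n-1}$---that manufactures the Pochhammer $(n)_{k}$ in the hypergeometric series. Your direct computation does not produce it, and the Euler/Pfaff transformations cannot supply it either: with $a=1$, $b=\tfrac{n-\alpha}{2}$, $c=b+1$ one has $c-a-b=0$, so Euler's identity is trivial, while Pfaff changes the argument to $w/(w-1)$ and sends the parameters to $(1,1;b+1)$ or $(b,b;b+1)$---no combination yields a first parameter $n$ with the same argument. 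Concretely, $(1-w)^{n}\,{}_{2}F_{1}(1,b;b+1;w)$ and ${}_{2}F_{1}(n,b;b+1;w)$ already disagree at first order in $w$ (their derivatives at $0$ are $\tfrac{b}{b+1}-n$ and $\tfrac{nb}{b+1}$, respectively), so the identity you are hoping for does not exist. A secondary issue you also gloss over is that $|w|=1$ everywhere, so the $k$-series is only conditionally convergent; the paper handles this via an Abel limit $r\to 1^{-}$ interpreted as weak convergence in the dual of a tailored Banach space $\mathcal{X}$, a step your direct summation does not justify.
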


\par The preliminaries have been given in this section. In section \ref{section:fund.sol} we prove the main theorem. In section \ref{section:alpha.lesser.n} we consider $\alpha<n$ and by means of the Integral Representation Theorem for the Gauss hypergeometric function we are able to give a more detailed expression for $\Phi_{\alpha}$ and finally deduce that for $\alpha=0$ we recover Folland's fundamental solution for the Heisenberg sublaplacian.

\section{The distribution $\Phi_{\alpha}$}\label{section:fund.sol}

\subsection{Well definedness}\label{section:fund.sol.well.def}

\par Let us see that $\Phi_{\alpha}$ is well defined in the sense of distributions. Indeed, we have that for $f\in\mathcal{S}(\mathfrak{h}_{n})$
\begin{align} \nonumber
|<-\Phi_{\alpha},f>|\le & \sum\limits_{k\ge 0} \frac{1}{|2k+n-\alpha|} \int\limits_{-\infty}^{\infty} |<\varphi_{\lambda,k},f>| |\lambda|^{n-1} d\lambda \\ \nonumber 
\le & \sum\limits_{k\ge 0} \frac{1}{|2k+n-\alpha|} \int\limits_{-\infty}^{\infty} \sum\limits_{||\beta||=k} |<E_{\lambda}(h_{\beta},h_{\beta}),f>|  |\lambda|^{n-1} d\lambda \\
= & \sum\limits_{k\ge 0} \frac{1}{|2k+n-\alpha|} \int\limits_{-\infty}^{\infty} \sum\limits_{||\beta||=k} |<\pi_{\lambda}(f)h_{\beta},h_{\beta}>|  |\lambda|^{n-1} d\lambda. \label{well.def.phialpha.1}
\end{align}

In order to see that the expression above is finite, let us recall a few facts about representation theory in general and about the Schr\"{o}dinger representation $\pi_{\lambda}$ in particular (see for example \cite{T}). Each $\pi_{\lambda}:\mathbb{H}_{n}\to\mathcal{U}(L^{2}(\mathbb{R}^{n}))$ is a strongly continuous, irreducible, unitary representation of the Lie group $\mathbb{H}_{n}$ on the Hilbert space $L^{2}(\mathbb{R}^{n})$. Thus, for $f\in L^{1}(\mathbb{H}_{n})\cap L^{2}(\mathbb{H}_{n})$ we have a well defined Hilbert-Schmidt operator $\pi_{\lambda}(f):L^2(\mathbb{R}^{n})\to L^2(\mathbb{R}^{n})$ with Hilbert-Schmidt norm satisfying \begin{align}\label{pilambda.HS.bounded} ||\pi_{\lambda}(f)||_{HS}\le ||f||_{1}.\end{align} A function $\varphi\in L^{2}(\mathbb{R}^{n})$ is a $C^{\infty}$ vector for $\pi_{\lambda}$ if the map from $\mathbb{H}_{n}$ to $L^{2}(\mathbb{R}^{n})$ given by $\pi_{\lambda}(z,t)\varphi$ is $C^{\infty}$. The subspace of $C^{\infty}$ vectors for $\pi_{\lambda}$, called the G\.{a}rding space of $\pi_{\lambda}$, is dense in $L^{2}(\mathbb{R}^{n})$ and it coincides with the Schwarz class $\mathcal{S}(\mathbb{R}^{n})$. Also, for an operator $D\in\mathcal{U}(\mathfrak{H}_{n})$ we have a well defined operator $d\pi_{\lambda}(D)$  defined in the G\.{a}rding space called the infinitesimal representation. For the left invariant vector fields conforming the canonical basis of $\mathfrak{h}_{n}$ we have that if $\varphi\in\mathcal{S}(\mathbb{R}^{n})$ and $\xi\in\mathbb{R}^{n}$,
\begin{align*}
d\pi_{\lambda}(X_{j})\varphi(\xi)=& \frac{\partial}{\partial \xi_{j}}\varphi(\xi),\qquad j=1,\dots,n, \\
d\pi_{\lambda}(Y_{j})\varphi(\xi)=& i \lambda \xi_{j} \varphi(\xi),\qquad j=1,\dots,n, \\
d\pi_{\lambda}(T)\varphi(\xi)=& i \lambda \varphi(\xi).
\end{align*}
From this formulas we quickly get that for the sublaplacian we have \begin{align*} d\pi_{\lambda}(L)\varphi(\xi)=&-\triangle \varphi(\xi)+\lambda^{2}|\xi|^{2}\varphi(\xi), \end{align*} which is exactly the scaled Hermite operator $H(\lambda)=-\triangle+\lambda^{2}|\xi|^{2}$, whose eigenfunctions are non other than the scaled Hermite functions $h_{\beta}$. Finally, a computation gives us \begin{align*} \pi_{\lambda}(Lf)h_{\beta}=& \pi_{\lambda}(f)d\pi_{\lambda}(L)h_{\beta}=\pi_{\lambda}(f)(-|\lambda|(2||\beta||+n))h_\beta,\end{align*}
that is, 
\begin{align*} 
\pi_{\lambda}(f)h_{\beta}=& -\frac{1}{|\lambda|(2||\beta||+n)}\pi_{\lambda}(Lf) h_{\beta},  
\end{align*}
and by recursively applying this formula we get for any $m\in\mathbb{N}$, 
\begin{align}\label{m.recursively}
\pi_{\lambda}(f)h_{\beta}=& (-1)^{m}\frac{1}{(|\lambda|(2||\beta||+n))^{m}}\pi_{\lambda}(L^{m}f) h_{\beta}.
\end{align}
We will use this fact in order to bound expression \eqref{well.def.phialpha.1}. Indeed, from all of the above and since the special Hermite functions are orthonormal, it is clear that 
\begin{align*}
|<\pi_{\lambda}(f)h_{\beta},h_{\beta}>| \le & ||\pi_{\lambda}(f)||_{HS} \le ||f||_{1},
\end{align*}
the last inequality following from \eqref{pilambda.HS.bounded}. Then we know from elementary combinatorial analysis that the sum for every multiindex $\beta=(\beta_{1},\dots,\beta_{n})\in\mathbb{N}_{0}$ such that $||\beta||=\beta_{1}+\dots+\beta_{n}=k$ of a constant independent of $\beta$ equals $\binom{k+n-1}{n-1}$ times the constant.

\par Now we are in position to see that expression \eqref{well.def.phialpha.1} is finite. Indeed, by writing $\mathbb{R}$ as the disjoint union of the sets $A=\{\lambda\in\mathbb{R}: 0\le |\lambda| |2k+n-\alpha| <1\}$ and $B=\{\lambda\in\mathbb{R}: |\lambda| |2k+n-\alpha| \ge 1\}$, we may split the integral into two summands and then use \eqref{m.recursively} as follows:
\begin{align*}
|<-\Phi_{\alpha},f>|\le & \sum\limits_{k\ge 0} \frac{1}{|2k+n-\alpha|} \int\limits_{A} \sum\limits_{||\beta||=k} |<\pi_{\lambda}(f)h_{\beta},h_{\beta}>|  |\lambda|^{n-1} d\lambda \\
 & + \sum\limits_{k\ge 0} \frac{1}{|2k+n-\alpha|} \int\limits_{B} \sum\limits_{||\beta||=k} |<\pi_{\lambda}(f)h_{\beta},h_{\beta}>|  |\lambda|^{n-1} d\lambda \\
= & I + II,
\end{align*}
hence we study each one separately. First we apply \eqref{m.recursively}, and then the key is to change the variable of integration according to $\sigma=\lambda|2k+n-\alpha|$, in both cases:
\begin{align*}
I\le & ||L^{m}f||_{1} \sum\limits_{k\ge 0} \binom{k+n-1}{n-1} \frac{1}{|2k+n-\alpha|} \frac{1}{|2k+n|^{m}} \int\limits_{A}  |\lambda|^{n-1-m} d\lambda \\
= & ||L^{m}f||_{1} \sum\limits_{k\ge 0} \binom{k+n-1}{n-1} \frac{1}{|2k+n-\alpha|^{n+1-m}} \frac{1}{|2k+n|^{m}} \int\limits_{0\le |\sigma| < 1}  |\sigma|^{n-1-m} d\sigma,
\end{align*}
and since the integral is finite for $m<n$ and $\binom{k+n-1}{n-1} \sim k^{n-1}$, we have that the summands are of order $\frac{1}{k^{2}}$, hence convergent.

Now, for the second one
\begin{align*}
II\le & ||L^{m}f||_{1} \sum\limits_{k\ge 0} \binom{k+n-1}{n-1} \frac{1}{|2k+n-\alpha|} \frac{1}{|2k+n|^{m}} \int\limits_{B}  |\lambda|^{n-1-m} d\lambda \\
= & ||L^{m}f||_{1} \sum\limits_{k\ge 0} \binom{k+n-1}{n-1} \frac{1}{|2k+n-\alpha|^{n+1-m}} \frac{1}{|2k+n|^{m}} \int\limits_{|\sigma|\ge 1}  |\sigma|^{n-1-m} d\sigma,
\end{align*}
and since the integral is finite for $m>n$ and $\binom{k+n-1}{n-1} \sim k^{n-1}$, we have that the summands are of order $\frac{1}{k^{2}}$, hence convergent.

\subsection{Explicit formula for the fundamental solution}\label{subsection:explicit}

\par In order to explicitly compute $\Phi_{\alpha}$ we begin by plugging in the explicit formula for the spherical functions \eqref{spherical.functions.explicit} into \eqref{Sol.Fund}. Observe first that, since we proved $\Phi_{\alpha}$ is well defined in the sense of distributions, we are allowed to interchange the integrals in $\mathbb{R}$ and $\mathbb{H}_{n}$. 
 Thus, formally, for a function $f\in\mathcal{S}(\mathbb{H}_{n})$,
\begin{align*}
<-\Phi_{\alpha},f> = &   \sum\limits_{k\ge 0} \frac{1}{2k+n-\alpha} \\
&  <\int\limits_{-\infty}^{\infty}e^{i\lambda t}L_{k}^{n-1}\left( {\frac{{|\lambda|}}{2}} |z|^{2} \right) e^{-\frac{|\lambda|}{4}|z|^{2}}|\lambda|^{n-1} d\lambda,f>\\
= &   \sum\limits_{k\ge 0} \frac{1}{2k+n-\alpha} \\
&  \int\limits_{\mathbb{H}_{n}}\int\limits_{-\infty}^{\infty}e^{i\lambda t}L_{k}^{n-1}\left( {\frac{{|\lambda|}}{2}} |z|^{2} \right) e^{-\frac{|\lambda|}{4}|z|^{2}}|\lambda|^{n-1} d\lambda f(z,t)  dzdt,
\end{align*}
then from Abel's Lemma we can write
\begin{align*}
<-\Phi_{\alpha},f> = & \lim\limits_{r\to 1^{-}} \lim\limits_{\epsilon\to 0^{+}} \sum\limits_{k\ge 0} \frac{r^{2k+n-\alpha}}{2k+n-\alpha} \\
&   \int\limits_{\mathbb{H}_{n}}\ \int\limits_{-\infty}^{\infty} e^{-\epsilon|\lambda|} e^{i\lambda t}L_{k}^{n-1}\left( \frac{|\lambda|}{2} |z|^{2} \right) e^{-\frac{|\lambda|}{4}|z|^{2}}|\lambda|^{n-1}  d\lambda  f(z,t) dzdt,
\end{align*}
and the generating identity for the Laguerre polynomials allows us to compute the integral on $\lambda$ (see formulas (4.8) and (4.9) of \cite{GS2} for a detailed proof):
\begin{align}\label{nosemeocurre}
& \int\limits_{-\infty}^{\infty} e^{-\epsilon|\lambda|} e^{i\lambda t}L_{k}^{n-1}\left( \frac{|\lambda|}{2} |z|^{2} \right) e^{-{\frac{|\lambda|}{4}}|z|^{2}}|\lambda|^{n-1} d\lambda  = 2 \beta_{n} \alpha_{k} \mathfrak{Re}  \left( {\frac{(|z|^{2}+4it+4\epsilon)^{k}}{(|z|^{2}-4it+4\epsilon)^{n+k}}} \right),
\end{align}
where
\begin{align}\label{beta.n}
\beta_{n} & = 4^{n}(n-1)!, \\ \label{alpha.k}
\alpha_{k} & = (-1)^{k} \binom{k+n-1}{n-1}.
\end{align}
The first step will be to take limit when $\epsilon$ goes to $0$, and with this in mind we observe that
\begin{align}\label{mod1.fs}
&\frac{(|z|^{2}+4it+4\epsilon)^{k}}{(|z|^{2}-4it+4\epsilon)^{n+k}} = \left(\frac{|z|^{2}+4it+4\epsilon}{|z|^{2}-4it+4\epsilon}\right)^{k+\frac{n}{2}} \frac{1}{[(|z|^{2}+4\epsilon)^{2}+16t^{2}]^{\frac{n}{2}	}},
\end{align}
and since the first factor on the right hand side of the above formula is a complex number of modulus $1$, we have that for any $\epsilon >0$
\begin{align}\label{kernel.bound}
&\left| \left(\frac{|z|^{2}+4it+4\epsilon}{|z|^{2}-4it+4\epsilon}\right)^{k+\frac{n}{2}} \frac{1}{[(|z|^{2}+4\epsilon)^{2}+16t^{2}]^{\frac{n}{2}}} \right| \le  \frac{1}{(|z|^{4}+16t^{2})^{\frac{n}{2}}}. 
\end{align}
Then, 
\begin{align*}
<-\Phi_{\alpha},f> = & \lim\limits_{r\to 1^{-}} \lim\limits_{\epsilon\to 0^{+}} \sum\limits_{k\ge 0} \frac{r^{2k+n-\alpha}}{2k+n-\alpha} \\
&   \int\limits_{\mathbb{H}_{n}} \mathfrak{Re} \left( \left(\frac{|z|^{2}+4it+4\epsilon}{|z|^{2}-4it+4\epsilon}\right)^{k+\frac{n}{2}} \right) \frac{1}{[(|z|^{2}+4\epsilon)^{2}+16t^{2}]^{\frac{n}{2}}}  f(z,t) dzdt,
\end{align*}
and in order to interchange limit with integral (that is, to apply Lebesgue's Dominated Convergence Theorem), from \eqref{kernel.bound} we only need to prove that 
\begin{align}\label{finitude}
I = & \int\limits_{\mathbb{H}_{n}} \frac{1}{(|z|^{4}+16t^{2})^{\frac{n}{2}}}  f(z,t) dzdt \qquad < \infty.
\end{align}
 First let us write $\mathbb{H}_{n}=\mathbb{C}^{n}\times\mathbb{R}$ and consider spherical coordinates in $\mathbb{C}^{n}$ as follows: if $(z,t)\in\mathbb{H}_{n}$, then $z=\xi\cdot R\in S^{2n-1}\times\mathbb{R}_{0}^{+}$, where obviously $|z|=R$ and the Jacobian gives us $dz=R^{2n-1}dRd\sigma(\xi)$, with $d\sigma(\xi)$ denoting the volume element:
\begin{align*}
I  = & \int\limits_{\mathbb{R}} \int\limits_{S^{2n-1}} \int\limits_{\mathbb{R}_{0}^{+}}  \frac{1}{(R^{4}+16t^{2})^{\frac{n}{2}}}  f(\xi\cdot R,t) R^{2n-1}dRd\sigma(\xi)dt.
\end{align*}
Now let us consider the averaging function $A$ defined for $f\in\mathcal{S}(\mathbb{H}_{n})$ by \begin{align}\label{Af} Af(|z|,t)=& \frac{1}{|S^{2n-1}|} \int\limits_{S^{2n-1}} f(\xi\cdot |z|,t) d\sigma(\xi). \end{align} Obviously $Af$ is a radial function in the sense that $Af(|z_{1}|,t)=Af(|z_{2}|,t)$ if $|z_{1}|=|z_{2}|$. A new change of variables according to $R^{2}=\tau$ gives us
\begin{align*}
I = &  \frac{|S^{2n-1}|}{2} \int\limits_{\mathbb{R}} \int\limits_{\mathbb{R}_{0}^{+}}   \frac{1}{(\tau^{2}+16t^{2})^{\frac{n}{2}}}  Af(\tau^{\frac{1}{2}},t) \tau^{n-1}d\tau dt.
\end{align*}
With these computations, the following Lemma justifies \eqref{finitude}.

\begin{lem}\label{integrability.in.halfplane}
The function
\begin{align*}
g(\tau,t) =& \frac{1}{(\tau^{2}+16t^{2})^{\frac{n}{2}}}  Af(\tau^{\frac{1}{2}},t) \tau^{n-1}
\end{align*}
is integrable on the halfplane
\begin{align*}
\mathcal{H} = & \{(\tau,t)\in\mathbb{R}^{2}:\tau >0\}.
\end{align*}
\end{lem}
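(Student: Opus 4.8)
The plan is to show that $g$ is integrable on $\mathcal{H}$ by controlling two different regions separately: the singularity of the kernel $(\tau^{2}+16t^{2})^{-n/2}$ at the origin, and the decay at infinity coming from the Schwartz nature of $f$ (hence of $Af$). The key structural fact I would exploit is that, in the variables $(\tau,t)$, the kernel is homogeneous of degree $-n$ while the measure $\tau^{n-1}\,d\tau\,dt$ contributes degree $n-1$, so near the origin the combined integrand behaves like $|(\tau,t)|^{-1}$ up to the bounded factor $Af$; this is the borderline case that must be handled with care, and I expect it to be the main obstacle.

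First I would split $\mathcal{H}$ into the bounded region $\mathcal{H}_{0}=\{0<\tau,\ \tau^{2}+t^{2}\le 1\}$ and its complement $\mathcal{H}_{\infty}=\mathcal{H}\setminus\mathcal{H}_{0}$. On $\mathcal{H}_{\infty}$, since $f\in\mathcal{S}(\mathbb{H}_{n})$ the averaged function $Af(\tau^{1/2},t)$ decays faster than any polynomial in $(\tau,t)$ (averaging over the compact sphere $S^{2n-1}$ preserves rapid decay), and the kernel together with $\tau^{n-1}$ grows only polynomially; hence $g$ is dominated by an integrable function there and $\int_{\mathcal{H}_{\infty}}|g|<\infty$ follows immediately. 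The substantive part is $\mathcal{H}_{0}$.

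On $\mathcal{H}_{0}$ I would pass to polar coordinates $\tau=\rho\cos\theta$, $t=\rho\sin\theta$ with $\rho\in(0,1]$ and $\theta\in(-\pi/2,\pi/2)$ (so that $\tau>0$), giving $d\tau\,dt=\rho\,d\rho\,d\theta$. Then
\begin{align*}
(\tau^{2}+16t^{2})^{n/2}\ \ge\ \rho^{n}\,(\cos^{2}\theta+16\sin^{2}\theta)^{n/2}\ \ge\ c\,\rho^{n}
\end{align*}
for a constant $c>0$ uniform in $\theta$, and $\tau^{n-1}=\rho^{n-1}\cos^{n-1}\theta$. Since $Af$ is bounded on the compact closure, $|g|$ is dominated on $\mathcal{H}_{0}$ by a constant times $\rho^{n-1}\cos^{n-1}\theta\cdot\rho^{-n}\cdot\rho = \cos^{n-1}\theta$ after including the Jacobian factor $\rho$; this is bounded and integrable over $\rho\in(0,1]$, $\theta\in(-\pi/2,\pi/2)$. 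The counting of powers is the crux: the kernel's degree $-n$ is exactly cancelled by $\tau^{n-1}$ and the polar Jacobian $\rho$, leaving a nonnegative power of $\rho$, so no logarithmic or polynomial blow-up survives.

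Combining the two estimates gives $\int_{\mathcal{H}}|g|\,d\tau\,dt<\infty$, which is the assertion. The delicate point to verify carefully is that the factor $\cos^{n-1}\theta$ does not conspire with the angular part of the kernel to produce a non-integrable singularity as $\theta\to\pm\pi/2$ (i.e.\ as $\tau\to0^{+}$); since $\cos^{n-1}\theta\to0$ there and the kernel denominator stays bounded below by $c$, the integrand in fact vanishes at those angles, so no obstruction arises.
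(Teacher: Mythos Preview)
Your argument is correct and rests on the same two ideas as the paper: near the origin the Schwartz average $Af$ is bounded and the homogeneity of the kernel $\tau^{n-1}(\tau^{2}+16t^{2})^{-n/2}$ makes the singularity integrable, while away from the origin the rapid decay of $Af(\tau^{1/2},t)$ takes over. The organisation differs slightly: the paper splits $\mathcal{H}$ into three pieces (the half-ellipse $B=\{\tau^{2}+16t^{2}<1\}$, a thin strip $S=\{|t|<1/8\}\setminus B$ where $\tau>1/2$, and the remaining set where $|t|\ge 1/8$), bounding each by elementary inequalities, whereas you use only two regions and handle the near-origin part by passing to polar coordinates. Your polar-coordinate computation makes the cancellation of powers of $\rho$ completely explicit (the paper simply asserts that $\tau^{n-1}(\tau^{2}+16t^{2})^{-n/2}$ is ``obviously integrable'' on $B$), and your treatment of $\mathcal{H}_{\infty}$ is more compressed than the paper's two-case analysis but equally valid since the kernel is in fact bounded there and $Af(\tau^{1/2},t)\le C_{N}(1+\tau^{1/2}+|t|)^{-N}$ provides all the decay needed. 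In short, the two proofs are minor reorganisations of each other; yours is marginally more economical.
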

\begin{proof}
We will proceed by dividing $\mathcal{H}$ into three regions. First we define the half ellipse region
\begin{align*}
B = & \{(\tau,t)\in \mathcal{H} : \tau^{2}+16t^{2} < 1\}.
\end{align*}
This region plays the role of the unitary ball in $\mathcal{H}$ with respect to the metric distance in $\mathbb{H}_{n}$ (recall \eqref{metric}). Indeed, if $(\tau,t)\in B$ then $|(\xi\cdot\tau^{\frac{1}{2}}, t)| = | (\tau^{\frac{1}{2}})^{4}+16t^{2} | < 1$. Hence, for $(\tau,t)\in B$,
\begin{align*}
|g(\tau,t)| \le & \frac{1}{(\tau^{2}+16t^{2})^{\frac{n}{2}}}  \frac{1}{|S^{2n-1}|}\int\limits_{S^{2n-1}} |f(\xi\cdot\tau^{\frac{1}{2}},t)| |d\sigma(\xi)| \tau^{n-1} \\
 \le & c \frac{\tau^{n-1}}{(\tau^{2}+16t^{2})^{\frac{n}{2}}},
\end{align*}
and the function on the right hand side of the inequality is obviously integrable in $B$.
Next, let us now define the strip
\begin{align*}
S = & \left\{(\tau,t)\in \mathcal{H}\backslash B : |t| < \frac{1}{8}\right\}.
\end{align*}
If $(\tau,t)\in S$, it is elemental to see that $\tau>\frac{1}{2}$,
\begin{align*}
\int\limits_{S} |g(\tau,t)|d\tau dt \le & \int\limits_{|t|< \frac{1}{8}} \int\limits_{\tau>\frac{1}{2}} \frac{1}{(\tau^{2}+16t^{2})^{\frac{n}{2}}}  |Af(\tau^{\frac{1}{2}},t)|\tau^{n-1} d\tau dt \\
 \le & \int\limits_{|t|< \frac{1}{8}} \int\limits_{\tau>\frac{1}{2}} \frac{1}{\tau} \frac{1}{|S^{2n-1}|} \int\limits_{S^{2n-1}} |f(\xi\cdot\tau^{\frac{1}{2}},t)| |d\sigma(\xi)| d\tau dt \\
\le & \int\limits_{|t|< \frac{1}{8}} \int\limits_{\tau>\frac{1}{2}} \frac{c}{\tau^{2}} d\tau dt \le c, 
\end{align*}
where the second inequality follows from the fact that $\tau^{2}+16t^{2}>\tau^{2}$, the third from the fact that $f\in\mathcal{S}(\HH_{n})$.
The third region is then the complement in $\mathcal{H}$ of $B\cup S$. In this region one has that $\frac{\tau^{n-1}}{(\tau^{2}+16t^{2})^{\frac{n}{2}}} \le \frac{\tau^{n-1}}{(\tau^{2}+\frac{1}{4})^{\frac{n}{2}}}$, and again since $f$ is a Schwartz function, its average is bounded by, for example, $\frac{c}{\tau t^{2}}$. Thus, by similar arguments we can assure that the integral of $|g(\tau,t)|$ in this region is also finite.
\end{proof}

\par Applying Lebesgue's Dominated Convergence Theorem it follows that

\begin{prop}
\begin{align*}
\lim\limits_{\epsilon\to 0^{+}}  \int\limits_{\HH_{n}} \mathfrak{Re}     \left( {\frac{(|z|^{2}+4it+4\epsilon)^{k}}{(|z|^{2}-4it+4\epsilon)^{n+k}}} \right)  f(z,t) dzdt = \int\limits_{\HH_{n}} \mathfrak{Re}     \left( {\frac{(|z|^{2}+4it)^{k}}{(|z|^{2}-4it)^{n+k}}} \right)  f(z,t) dzdt. 
\end{align*}
\end{prop}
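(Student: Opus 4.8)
The plan is to recognize this statement as a direct application of Lebesgue's Dominated Convergence Theorem, whose two hypotheses---pointwise convergence of the integrands and a single integrable dominating function independent of $\epsilon$---have essentially already been assembled in the computations leading up to \eqref{kernel.bound} and in Lemma \ref{integrability.in.halfplane}.

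First I would establish pointwise convergence of the integrands for almost every $(z,t)$. For any fixed $(z,t)\neq(0,0)$ the factor $|z|^{2}-4it+4\epsilon$ stays away from zero as $\epsilon\to 0^{+}$: it vanishes only when $t=0$ and $|z|^{2}+4\epsilon=0$, which forces $z=0$, $t=0$ and $\epsilon=0$ simultaneously. Hence the rational expression $\frac{(|z|^{2}+4it+4\epsilon)^{k}}{(|z|^{2}-4it+4\epsilon)^{n+k}}$ depends continuously on $\epsilon$ at $\epsilon=0$, and since $\mathfrak{Re}$ is continuous, the integrand converges to $\mathfrak{Re}\!\left(\frac{(|z|^{2}+4it)^{k}}{(|z|^{2}-4it)^{n+k}}\right)f(z,t)$ pointwise off the single point $(0,0)$, which is a Lebesgue null set in $\HH_{n}$.

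Next I would supply the $\epsilon$-independent dominating function, which is exactly what the factorization \eqref{mod1.fs} together with the estimate \eqref{kernel.bound} deliver. Writing the kernel as a unimodular factor times $\frac{1}{[(|z|^{2}+4\epsilon)^{2}+16t^{2}]^{n/2}}$ and using $|z|^{2}+4\epsilon\ge|z|^{2}$, one obtains for every $\epsilon>0$ the uniform bound
\begin{align*}
\left|\mathfrak{Re}\!\left(\frac{(|z|^{2}+4it+4\epsilon)^{k}}{(|z|^{2}-4it+4\epsilon)^{n+k}}\right)f(z,t)\right| \le \frac{|f(z,t)|}{(|z|^{4}+16t^{2})^{\frac{n}{2}}}.
\end{align*}
The remaining point is that this dominating function is integrable on $\HH_{n}$, that is, the finiteness \eqref{finitude} of the integral $I$; after passing to spherical coordinates in $\mathbb{C}^{n}$ and applying the substitution $R^{2}=\tau$, this is precisely the content of Lemma \ref{integrability.in.halfplane}.

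With pointwise convergence off a null set and an integrable $\epsilon$-uniform majorant in hand, Lebesgue's Dominated Convergence Theorem permits the interchange of the limit $\epsilon\to 0^{+}$ with the integral over $\HH_{n}$, which yields the asserted identity. I expect no genuine obstacle in the Proposition itself: the real analytic difficulty---controlling the singularity of the kernel near the origin and exploiting the rapid decay of the Schwartz average at infinity---has already been absorbed into Lemma \ref{integrability.in.halfplane}, so what remains here is essentially the verification of the three hypotheses of the Dominated Convergence Theorem.
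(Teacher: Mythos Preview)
Your proposal is correct and matches the paper's approach exactly: the Proposition is stated immediately after Lemma~\ref{integrability.in.halfplane} with the one-line justification ``Applying Lebesgue's Dominated Convergence Theorem it follows that,'' relying on the bound \eqref{kernel.bound} and the integrability established in Lemma~\ref{integrability.in.halfplane}. Your write-up simply makes explicit the pointwise convergence and domination hypotheses that the paper leaves implicit.
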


\par Using again formula \eqref{mod1.fs} we are able to write
\begin{align*}
<-\Phi_{\alpha},f> = & 2 \beta_{n}  \lim\limits_{r\to 1^{-}} \sum\limits_{k\ge 0} \alpha_{k} \frac{r^{2k+n-\alpha}}{2k+n-\alpha} \\
&   \int\limits_{\mathbb{H}_{n}}  \mathfrak{Re} \left( \left(\frac{|z|^{2}+4it}{|z|^{2}-4it}\right)^{k+\frac{n}{2}} \right) \frac{1}{(|z|^{4}+16t^{2})^{\frac{n}{2}}}  f(z,t) dzdt,
\end{align*}
and by performing the exact same variable changes as we did just above of Lemma \ref{integrability.in.halfplane} we obtain that 
\begin{align*}
<-\Phi_{\alpha},f> = & 2 \beta_{n}  \lim\limits_{r\to 1^{-}} \sum\limits_{k\ge 0} \alpha_{k} \frac{r^{2k+n-\alpha}}{2k+n-\alpha} \\
& \frac{|S^{2n-1}|}{2} \int\limits_{\mathbb{R}} \int\limits_{\mathbb{R}_{0}^{+}}  \mathfrak{Re}\left(\left(\frac{\tau+4it}{\tau-4it}\right)^{k+\frac{n}{2}}\right)  \frac{1}{(\tau^{2}+16t^{2})^{\frac{n}{2}}}  Af(\tau^{\frac{1}{2}},t) \tau^{n-1}d\tau dt.
\end{align*}
In order to integrate in the halfplane we need to change variables one more time: let $\tau+4it=\rho e^{i\theta}$, then
\begin{align}\nonumber
<-\Phi_{\alpha},f>=& \hat{\beta_{n}} \lim\limits_{r\to 1^{-}} \sum\limits_{k\ge 0}\alpha_{k}\frac{r^{2k+n-\alpha}}{2k+n-\alpha}  \\ \nonumber
& \int\limits_{-\frac{\pi}{2}}^{\frac{\pi}{2}}  \mathfrak{Re}(e^{i(2k+n)\theta}) \cos^{n-1}\theta \int\limits_{0}^{\infty} Af(\rho^{\frac{1}{2}}\cos^{\frac{1}{2}}\theta,\frac{\rho}{4}\sin\theta)d\rho d\theta \\ \label{Phi.r.alpha.resumida}
= & \hat{\beta_{n}} \lim\limits_{r\to 1^{-}} \int\limits_{-\frac{\pi}{2}}^{\frac{\pi}{2}} (\mathfrak{Re}\Psi_{r,\alpha}(\theta)) K_{f}(\theta) d\theta,
\end{align}
where 
\begin{align} \nonumber
\hat{\beta_{n}} =& \frac{|S^{2n-1}|}{8} \beta_{n}, \\
\Psi_{r,\alpha}(\theta) =& \sum\limits_{k\ge 0}\alpha_{k}\frac{r^{2k+n-\alpha}}{2k+n-\alpha} e^{i(2k+n)\theta},\\
K_{f}(\theta) =& \cos^{n-1}\theta \int\limits_{0}^{\infty} Af(\rho^{\frac{1}{2}}\cos^{\frac{1}{2}}\theta,\frac{\rho}{4}\sin\theta)d\rho.
\end{align}

In order to compute the limit when $r$ goes to $1$ from below, we need to give sense to the expression $\Psi_{r,\alpha}\to \Psi_{1,\alpha}$. With this in mind, let us define
\begin{align}\label{Psi.alpha}
\Psi_{\alpha}(\theta) := \Psi_{1,\alpha} =& \sum\limits_{k\ge 0}\alpha_{k}\frac{1}{2k+n-\alpha} e^{i(2k+n)\theta},
\end{align}
and let us consider the vector space $\mathcal{X}$ of those functions supported in the half circle $\left[ -\frac{\pi}{2},\frac{\pi}{2} \right]$ which have $n-2$ continuous derivatives vanishing in $\pm\frac{\pi}{2}$ and also have essentially bounded $n-1$ derivative. Explicitly,

\begin{align}\label{X}
\mathcal{X}=& \left\{ g\in C^{n-2}\left(\left[ -\frac{\pi}{2},\frac{\pi}{2} \right]\right) : g^{(j)}\left( \pm\frac{\pi}{2} \right)=0, 0\le j \le n-2, g^{(n-1)}\in L^{\infty} \left(\left[ -\frac{\pi}{2},\frac{\pi}{2} \right]\right) \right\}.
\end{align}
Thus defined, the function $K_{f}$ belongs to the space $\mathcal{X}$ (just observe that each one of its derivatives has a power of the cosine function as a factor). We shall identify $g\in\mathcal{X}$ with a function $\tilde{g}$ defined by $0$ outside $supp(g)$, and make no distinction between $g$ and $\tilde{g}$. Hence we can regard $g\in\mathcal{X}$ as a function belonging to $C^{n-2}(S^{1})$ with $g^{n-1}\in L^{\infty}(S^{1})$ supported in $\left[-\frac{\pi}{2},\frac{\pi}{2}\right]$. Also, observe that if $g\in\mathcal{X}$, then $e^{i\alpha\theta}g\in\mathcal{X}$. Finally, let us consider the topology in $\mathcal{X}$ given for $g\in\mathcal{X}$ by
\begin{align*}
||g||_{\mathcal{X}} & = \max\{ ||g^{(j)}||_{\infty}, 0\le j \le n-1 \},
\end{align*}
which makes $\mathcal{X}$ a Banach space. Thus, if $g\in\mathcal{X}$, $<\Psi_{\alpha},g>$ is well defined:
\begin{align*}
|<\Psi_{\alpha},g>| \le &  \sum\limits_{k\ge 0} \binom{k+n-1}{k} \frac{|<e^{i(2k+n)\theta},g>|}{|2k+n-\alpha|},
\end{align*}
and if we denote the $m$-th Fourier coefficient of $g$ by $\hat{g}(m)=<e^{im\theta},g>$, we have that
\begin{align*}
|<\Psi_{\alpha},g>| \le & c \sum\limits_{k\ge 0} \frac{k^{n-1}}{|2k+n|^{n-1}} \frac{|\hat{g}^{(n-1)}(2k+n)|}{|2k+n-\alpha|} \\
\le & c \sum\limits_{k\ge 0} \frac{1}{k} |\hat{g}^{(n-1)}(2k+n)| \le c \left(\sum\limits_{k\ge 0} \frac{1}{k^{2}} \right)^{2}||g^{(n-1)}||_{L^{2}}\\
&\leq c||g||_{\mathcal{X}},
\end{align*}
where each $c$ denotes a constant not necessarily the same on every inequality, and the last inequality is just the Cauchy-Schwarz inequality.
Hence $\Psi_{\alpha}\in\mathcal{X}'$.
 Furthermore, Abel's Lemma gives us $$\lim\limits_{r\to 1^{-}} \Psi_{r,\alpha} = \Psi_{\alpha}$$ in $\mathcal{X}'$, with the weak convergence topology.

Let us now compute $<\Psi_{\alpha},g>$ for $g\in\mathcal{X}$. We can rewrite the series defining $\Psi_{r,\alpha}$ and $\Psi_{\alpha}$ to get the Gauss Hypergeometric Function $_{2}F_{1}$ which we recall next (see, for example, the classical book Slater \cite{Sl})\footnote{See also the book of Rainville on Special Functions \cite{R} and the comprehensive work of \cite{H}.}: for parameters $a,b,c\in\mathbb{C}$, $c\notin\mathbb{Z}^{-}$ the Gauss Hypergeometric Function is defined as
\begin{align*}
_{2}F_{1}(a,b,c;\zeta)=& \sum\limits_{k=0}^{\infty} \frac{(a)_{k}(b)_{k}}{(c)_{k}} \frac{\zeta^{k}}{k!},
\end{align*}
for $|\zeta|<1$, and by analytic continuation elsewhere, where $(a)_{k}=a(a+1)(a+2)\dots(a+k-1)$ if $k\in\mathbb{N}$ and $(a)_{k}=1$ for $k= 0$ and $a\neq 0$ is the Pochhammer symbol, which can be also written in terms of the well known Gamma Function $\Gamma$: if $a\notin\mathbb{Z}^{-}$
\begin{align}\label{Pochammer}
(a)_{k}=&\frac{\Gamma(a+k)}{\Gamma(a)}.
\end{align} With this in mind, together with the factorial property of the Gamma function, namely $\Gamma(a+1)=a\Gamma(a)$, we have
\begin{align} \nonumber
\Psi_{r,\alpha}(\theta) = & r^{n-\alpha}e^{in\theta}\sum\limits_{k=0}^{\infty}\frac{(k+n-1)!}{(n-1)!} \frac{1}{2\left(k+\frac{n-\alpha}{2}\right)} \frac{(-r^{2}e^{2i\theta})^{k}}{k!} \\ \nonumber
=& \frac{r^{n-\alpha}e^{in\theta}}{2}\sum\limits_{k=0}^{\infty}(n)_{k} \frac{\Gamma\left(k+\frac{n-\alpha}{2}\right)}{\Gamma\left(k+\frac{n-\alpha}{2}+1\right)}\frac{(-r^{2}e^{2i\theta})^{k}}{k!} \\ \nonumber
=& \frac{r^{n-\alpha}e^{in\theta}}{n-\alpha}\sum\limits_{k=0}^{\infty} \frac{(n)_{k}\left(\frac{n-\alpha}{2}\right)_{k}}{\left(\frac{n-\alpha}{2}+1\right)_{k}}\frac{(-r^{2}e^{2i\theta})^{k}}{k!} \\ \label{Phi.r.alpha.hypergeom}
=& \frac{r^{n-\alpha}e^{in\theta}}{n-\alpha} \,_{2}F_{1}\left(n,\frac{n-\alpha}{2},\frac{n-\alpha}{2}+1;-r^{2}e^{2i\theta}\right).
\end{align}
The weak convergence gives us, when $r\to 1^{-}$, that for a function $g\in\mathcal{X}$,
\begin{align} \label{Phi.alpha.hypergeom}
<\Psi_{\alpha}(\theta),g> =& <\frac{e^{in\theta}}{n-\alpha} \,_{2}F_{1}\left(n,\frac{n-\alpha}{2},\frac{n-\alpha}{2}+1;-e^{2i\theta}\right),g>.
\end{align}

For the sake of notation, let us call the Gauss hypergeometric function with parameters $n,\frac{n-\alpha}{2},\frac{n-\alpha}{2}+1$ as follows:
\begin{align}\label{function.F.alpha}
F_{\alpha}(\omega)=& \,_{2}F_{1}\left(n,\frac{n-\alpha}{2},\frac{n-\alpha}{2}+1;\omega \right),
\end{align}
for the appropriate parameter $\omega$.

\par The next step is to go backwards the changes of variables. Let us take over from \eqref{Phi.r.alpha.resumida}, take the weak limit when $r\to 1^{-}$ and apply formula \eqref{Phi.alpha.hypergeom}:
\begin{align*}
<-\Phi_{\alpha},f> = & \frac{4^{n-1}(n-1)!}{2(n-\alpha)} |S^{2n-1}| \int\limits_{-\frac{\pi}{2}}^{\frac{\pi}{2}} \mathfrak{Re}\left( e^{in\theta} F_{\alpha}\left(-e^{2i\theta}\right) \right)  \\
& \times \cos^{n-1}\theta \int\limits_{0}^{\infty} Af(\rho^{\frac{1}{2}}\cos^{\frac{1}{2}}\theta,\frac{\rho}{4}\sin\theta)d\rho d\theta \\
= & \frac{4^{n}(n-1)!}{2(n-\alpha)} |S^{2n-1}| \int\limits_{\mathbb{R}} \int\limits_{\mathbb{R}_{0}^{+}} \mathfrak{Re}\left(\frac{(\tau+4it)^{n}}{(\tau^{2}+16t^{2})^{\frac{n}{2}}} F_{\alpha}\left(-\frac{(\tau+4it)^{2}}{\tau^{2}+16t^{2}} \right) \right) \\
& \times  \frac{\tau^{n-1}}{(\tau^{2}+16t^{2})^{\frac{n}{2}}} Af(\tau^{\frac{1}{2}},t) d\tau dt \\
= & \frac{4^{n}(n-1)!}{n-\alpha} |S^{2n-1}| \int\limits_{\mathbb{R}} \int\limits_{\mathbb{R}_{0}^{+}} \mathfrak{Re}\left(\frac{(R^{2}+4it)^{n}}{(R^{4}+16t^{2})^{\frac{n}{2}}} F_{\alpha}\left(-\frac{(R^{2}+4it)^{2}}{R^{4}+16t^{2}} \right) \right) \\
& \times  \frac{R^{2n-1}}{(R^{4}+16t^{2})^{\frac{n}{2}}}  Af(R,t) dRdt \\ 
= & \frac{4^{n}(n-1)!}{n-\alpha} \int\limits_{\mathbb{R}} \int\limits_{S^{2n-1}} \int\limits_{\mathbb{R}_{0}^{+}} \mathfrak{Re}\left(\frac{(R^{2}+4it)^{n}}{(R^{4}+16t^{2})^{\frac{n}{2}}} F_{\alpha}\left(-\frac{(R^{2}+4it)^{2}}{R^{4}+16t^{2}} \right) \right) \\
& \times  \frac{R^{2n-1}}{(R^{4}+16t^{2})^{\frac{n}{2}}}  f(\xi\cdot R,t)  dRd\sigma(\xi)dt \\ 
= & \frac{4^{n}(n-1)!}{n-\alpha} \int\limits_{\mathbb{R}} \int\limits_{\mathbb{C}^{n}} \mathfrak{Re}\left(\left(\frac{|z|^{2}+4it}{|z|^{4}+16t^{2}}\right)^{n} F_{\alpha}\left(-\frac{(|z|^{2}+4it)^{2}}{|z|^{4}+16t^{2}} \right) \right) f(z,t)  dzdt.
\end{align*}
We have thus proved Theorem \ref{thm:main}.

\section{The case $\alpha < n$}\label{section:alpha.lesser.n}

Let us begin by recalling an Integral Representation for the Gauss hypergeometric function (see for example \cite{Sl}). For $|\omega|<1$, $\mathfrak{Re}(c)>0$, $\mathfrak{Re}(b)>0$,
\begin{align*}
_{2}F_{1}(a,b,c;\omega)=&\frac{\Gamma(c)}{\Gamma(b)\Gamma(c-b)}\int\limits_{0}^{1} \frac{{s}^{b-1}(1-s)^{c-b-1}}{(1-\omega s)^{a}}ds.
\end{align*}

Along this section we will consider $\alpha<n$. Then, from equation \eqref{Phi.r.alpha.hypergeom}, since $-r^{2}e^{2i\theta}$ is a complex number of modulus lesser than 1, we are allowed to write
\begin{align*} 
\Psi_{r,\alpha}(\theta) = & \frac{r^{n-\alpha}e^{in\theta}}{2} \int\limits_{0}^{1} \frac{s^{\frac{n-\alpha}{2}-1}}{(1+r^{2}e^{2i\theta}s)^{n}} ds,
\end{align*}
hence the weak convergence also gives us, when $r\to 1^{-}$, that for a function $g\in\mathcal{X}$,
\begin{align*}
<\Psi_{\alpha}(\theta),g> = & \frac{1}{2} <e^{in\theta} \int\limits_{0}^{1} \frac{s^{\frac{n-\alpha}{2}-1}}{(1+e^{2i\theta}s)^{n}} ds , g>,
\end{align*}

In order to compute the real part of this distribution in the weak sense, that is, applied to a function $g$, we need to understand the integral factor. First let us change variables according to $s=u^{2}$ 
\begin{align}\label{int.3}
I= & \int\limits_{0}^{1} \frac{s^{\frac{n-\alpha}{2}-1}}{(1+e^{2i\theta}s)^{n}} ds = 2\int\limits_{0}^{1} \frac{u^{n-\alpha-1}}{(1+e^{2i\theta} u^{2})^{n}} du.
\end{align}
Let us now consider the complex function 
\begin{align*}
h(\zeta) = & \frac{\zeta^{n-\alpha-1}}{(1+\zeta^{2})^{n}}.
\end{align*}
This function has poles in $\zeta = i$ and $\zeta =-i$, and we can apply Cauchy's Theorem for the curve given by the boundary of a circular sector centred at the origin, with radius $1$ and sides on the $x$-axis and on the straight line through the origin forming an angle of width $\theta\in(-\frac{\pi}{2},\frac{\pi}{2})$ with it. That is to say,
\begin{align*}
\int\limits_{\gamma_{3}}h(\zeta)d\zeta = & \int\limits_{\gamma_{1}}h(\zeta)d\zeta + \int\limits_{\gamma_{2}}h(\zeta)d\zeta,
\end{align*}
where $\gamma_{1,2,3}$ are the curves parametrized as follows:
\begin{align*}
\gamma_{1}(x) = & x, \mbox{ for } 0\le x \le 1, \\
\gamma_{2}(\sigma) = & e^{i\sigma}, \mbox{ for } 0\le\sigma\le\theta, \\
\gamma_{3}(u) = & e^{i\theta}u, \mbox{ for } 0\le u\le 1.
\end{align*}
It is straightforward to see that $I=2 e^{-i\theta(n-\alpha)} (I_{1}+I_{2})$, where $I_{j}$ denotes the integral of $h$ on the curve $\gamma_{j}$ ($j=1,2,3$). So we need to compute both $I_{1}$ and $I_{2}$.
\begin{align*}
I_{1} = & \int\limits_{0}^{1} \frac{x^{n-\alpha-1}}{(1+x^{2})^{n}}dx =  \frac{1}{2} \int\limits_{0}^{1} \frac{y^{\frac{n-\alpha}{2}-1}}{(1+y)^{n}} dy  \\
= & \frac{1}{2} \int\limits_{0}^{\frac{1}{2}} \nu^{\frac{n-\alpha}{2}-1} (1-\nu)^{\frac{n+\alpha}{2}-1} d\nu,
\end{align*}
where we changed variables according to $y=x^{2}$ first and $\nu=\frac{y}{y+1}$ second. This last integral is real and it is known as the incomplete Beta function (see for example \cite{D}). Thus,
\begin{align*}
I_{1} = & \frac{1}{2} B_{\frac{1}{2}}\left( \frac{n-\alpha}{2}, \frac{n+\alpha}{2} \right).
\end{align*}

Next let us compute $I_{2}$.
\begin{align*}
I_{2} = & i \int\limits_{0}^{\theta} \frac{e^{i(n-\alpha)\sigma}}{(1+e^{2i\sigma})^{n}} d\sigma = i\int\limits_{0}^{\theta} \frac{e^{i(n-\alpha)\sigma}}{e^{in\sigma}(e^{-i\sigma}+e^{i\sigma})^{n}} d\sigma = \\
= & \frac{i}{2^{n}} \int\limits_{0}^{\theta} \frac{e^{-i\alpha\sigma}}{\cos^{n}\sigma} d\sigma = \frac{1}{2^{n}} \int\limits_{0}^{\theta} \frac{\sin\alpha\sigma}{\cos^{n}\sigma} d\sigma + \frac{i}{2^{n}} \int\limits_{0}^{\theta} \frac{\cos\alpha\sigma}{\cos^{n}\sigma} d\sigma.
\end{align*}

Finally, for a function $g\in\mathcal{X}$,
\begin{align*}
<\mathfrak{Re} \Psi_{\alpha} (\theta) , g> = & <\mathfrak{Re} e^{i\alpha\theta}(I_{1}+I_{2}),g> = \\
 = & <\frac{1}{2}\cos\alpha\theta B_{\frac{1}{2}}\left( \frac{n-\alpha}{2},\frac{n+\alpha}{2} \right) + \frac{1}{2^{n}} \int\limits_{0}^{\theta}\frac{\sin(\alpha(\sigma-\theta))}{\cos^{n}\sigma} d\sigma, g>,
\end{align*}
where the function $$m_{\alpha}(\theta)=\int\limits_{0}^{\theta}\frac{\sin(\alpha(\sigma-\theta))}{\cos^{n}\sigma} d\sigma$$ is the solution to the initial value problem for the second order ordinary differential equation which models the undamped forced motion, namely $y''(\theta)+\alpha^{2}y(\theta)+\alpha\sec^{n}(\theta)=0$, $y(0) = 0$, $y'(0) = 0$. Observe that $m_{0}(\theta)\equiv 0$.

From the above, by reversing the changes of variable, we have that
\begin{align*}
<-\Phi_{\alpha},f> & =  4^{n}(n-1)! \\
& \int\limits_{\mathbb{H}_{n}} \left[ \frac{1}{2}B_{\frac{1}{2}}\left(\frac{n-\alpha}{2},\frac{n+\alpha}{2}\right)\cos\left(\alpha\arctan\frac{4t}{|z|^{2}}\right) + \frac{1}{2^{n}} m_{\alpha}\left( \arctan\frac{4t}{|z|^{2}} \right)\right] \\
& \frac{1}{(|z|^{4}+16t^{2})^{\frac{n}{2}}} f(z,t) dz dt.
\end{align*}
Hence, when $\alpha=0$ since $B_{\frac{1}{2}}\left(\frac{n}{2},\frac{n}{2}\right)=\frac{\Gamma\left(\frac{n}{2}\right)^2}{2\Gamma(n)}$, we recover Folland's fundamental solution for the Heisenberg sublaplacian. Indeed, from the relations between the incomplete Beta and Gamma functions, it follows that
\begin{align*}
<-\Phi_{0},f> & =  4^{n-1}\Gamma\left(\frac{n}{2}\right)^{2} \int\limits_{\mathbb{H}_{n}} \frac{1}{(|z|^{4}+16t^{2})^{\frac{n}{2}}} f(z,t) dz dt.
\end{align*}

\Addresses

\end{document}